\tikzstyle arrowstyle=[scale=1]
\tikzstyle directed=[postaction={decorate,decoration={markings,
    mark=at position .58 with {\arrow[arrowstyle]{to}}}}]
\theoremstyle:=definition,remark,plain\do{%
        \expandafter\g@addto@macro\csname th@\theoremstyle\endcsname{%
            \addtolength\thm@preskip\parskip
            }%
        }
\numberwithin{equation}{section}
\theoremstyle{plain}
\newtheorem{theorem}[subsection]{Theorem}
\newtheorem{proposition}[subsection]{Proposition}
\newtheorem{lemma}[subsection]{Lemma}
\newtheorem{corollary}[subsection]{Corollary}
\newtheorem{question}[subsection]{Question}
\newtheorem{remark}[subsection]{Remark}
\theoremstyle{definition}
\newtheorem{definition}[subsection]{Definition}
\newtheorem{example}[subsection]{Example}
\renewcommand{\leq}{\leqslant}
\renewcommand{\geq}{\geqslant}
\newcommand{\eps}{\varepsilon}
\def\vs{\vspace{11pt}}
\DeclareMathOperator{\Aut}{Aut}
\DeclareMathOperator{\supp}{supp}
\def\A{{\mathcal A}}
\def\C{{\mathcal C}}
\def\Z{{\mathbb Z}}
\def\E{{\mathcal E}}
\def\F{{\mathbb F}}
\def\I{{\mathcal I}}
\def\P{{\mathcal P}}
\def\R{{\mathbb R}}
\def\N{{\mathbb N}}
\def\O{{\mathcal O}}
\def\S{{\mathcal S}}
\def\U{{\mathcal U}}
\begin{document}

\title{On Invariant Schreier structures}

\author{Jan Cannizzo}

\address{Department of Mathematics and Statistics\newline
\indent University of Ottawa\newline
\indent 585 King Edward Avenue\newline
\indent Ottawa, Ontario, K1N 6N5\newline
\indent Canada\newline}

\email{jcann071@uOttawa.ca}

\thanks{I am grateful to my advisor, Vadim Kaimanovich, for his support and for helpful comments regarding the preparation of this paper. I thank the anonymous referee for suggesting a number of improvements.}


\maketitle

\begin{abstract}
Schreier graphs, which possess both a graph structure and a \emph{Schreier structure} (an edge-labeling by the generators of a group), are objects of fundamental importance in group theory and geometry. We study the Schreier structures with which unlabeled graphs may be endowed, with emphasis on structures which are \emph{invariant} in some sense (e.g.\ conjugation-invariant, or \emph{sofic}). We give proofs of a number of ``folklore'' results, such as that every regular graph of even degree admits a Schreier structure, and show that, under mild assumptions, the space of invariant Schreier structures over a given invariant graph structure is very large, in that it contains uncountably many ergodic measures. Our work is directly connected to the theory of \emph{invariant random subgroups}, a field which has recently attracted a great deal of attention.
\end{abstract}

\section{Introduction}

A \emph{Schreier graph} $\Gamma$ possesses two kinds of structures, which we will for the moment refer to as a \emph{geometric structure} and an \emph{algebraic structure}. The former is the underlying graph structure, which determines the geometry of $\Gamma$, in particular allowing one to equip $\Gamma$ with a metric. The latter is the labeling of edges of $\Gamma$ with the generators of a group $G$, which one may always assume to be the free group $\F_n\colonequals\langle a_1,\ldots,a_n\rangle$. The algebraic structure is not an arbitrary labeling: each vertex $x\in\Gamma$ must be attached to precisely one ``incoming'' and one ``outgoing'' edge labeled with a given generator $a_i$. Each such labeling, together with a choice of root, identifies $\Gamma$ as a particular subgroup of $\F_n$, and in general a given unlabeled graph may possess many---indeed, even uncountably many---distinct algebraic structures.

This paper is, broadly speaking, an investigation of the algebraic structures---which we will henceforth call \emph{Schreier structures}---with which $2n$-regular graphs may be endowed (recall that a graph is $2n$-regular if each of its vertices has degree $2n$). We are especially interested in random Schreier structures which are \emph{invariant} in some sense. To be more precise, denote by $\Lambda$ the space of Schreier graphs of $\F_n$ (which are naturally rooted graphs) and by $\Omega$ the space of rooted $2n$-regular graphs, and consider the forgetful map $f:\Lambda\to\Omega$ that sends a Schreier graph to its underlying unlabeled graph. There is an induced map $f:\P(\Lambda)\to\P(\Omega)$ from the space of probability measures on $\Lambda$ to the space of probability measures on $\Omega$, and moreover the space $\P(X)$, where $X=\Lambda$ or $\Omega$, contains several subspaces of ``nice'' measures, namely: $\C(\Lambda)$, the space of probability measures on $\Lambda$ invariant under the action of $\F_n$ by conjugation; $\I(X)$, the space of measures invariant with respect to the discrete measured equivalence relation underlying $X$; $\U(X)$, the space of \emph{unimodular} measures; and $\S(X)\subseteq\U(X)$, the space of \emph{sofic} measures (roughly speaking, those measures which admit approximations by measures supported on finite graphs).

Our results may be summarized as follows:

\begin{itemize}
	\item[i.] The map $f:\Lambda\to\Omega$ is surjective, i.e.\ every $2n$-regular graph admits a Schreier structure (Theorem~\ref{lab2}).
	\item[ii.] $\C(\Lambda)=\I(\Lambda)=\U(\Lambda)$, i.e.\ the spaces of conjugation-invariant, invariant, and unimodular measures on $\Lambda$ coincide (Theorem~\ref{inv}).
	\item[iii.] $f_*\U(\Lambda)\subseteq\U(\Omega)$, i.e.\ the image of a unimodular (equivalently, invariant) measure on $\Lambda$ is a unimodular measure on $\Omega$ (Proposition~\ref{forget}).
	\item[iv.] The induced map $f:\S(\Lambda)\to\S(\Omega)$ is surjective, i.e.\ any sofic measure on $\Omega$ can be lifted to a sofic measure on $\Lambda$ (Proposition~\ref{sofsurj}).
	\item[v.] Assuming it is nonempty, the fiber $f^{-1}(\mu)$ of invariant measures over a unimodular measure $\mu\in\U(\Omega)$ supported on \emph{rigid graphs} is very large, in that it contains an uncountable family of ergodic measures, many of which we are able to describe explicitly (Theorem~\ref{uncount}).
	\item[vi.] For a large class of groups $G$, the Dirac measure $\delta_G$ concentrated on an unlabeled Cayley graph of $G$ can be lifted to a nonatomic invariant measure on $\Lambda$ (Theorem~\ref{lab3}).
\end{itemize}

The first three of these statements are certainly known to experts, yet they might best be described as ``folklore''---though they are often used, it may be difficult (and in some cases impossible) to find explicit and general proofs in the literature. Moreover, we are able to use statement iii.\ to exhibit closed invariant subspaces of $\Lambda$ which do not support an invariant measure (see Corollary~\ref{noinv} and Example~\ref{grandfather}). The latter three statements comprise the main results of the paper. Morally speaking, they show that there exists a wealth of invariant algebraic structures sitting atop a given invariant geometric structure. This is line with and expands upon recent work by Bowen~\cite{B}, who showed that the subspace of $\I(\Lambda)$ consisting of measures supported on infinite graphs is a \emph{Poulsen simplex} (the set of extremal points, i.e.\ ergodic measures, is dense). Indeed, some of our work is inspired by his.

Via the correspondence between the Schreier graphs of a given group $G$ and the lattice of subgroups $L(G)$ of that group, an invariant Schreier structure determines an \emph{invariant random subgroup}, i.e.\ a conjugation-invariant probability measure on $L(G)$. The study of invariant random subgroups has recently attracted a great deal of attention (see, for example, \cite{AGV}, \cite{ADMN}, \cite{B}, \cite{BGK}, \cite{C}, \cite{V1}, and \cite{V2}), but much about them remains unknown. Concerning our work, we do not know whether statement iv.\ above holds in full generality, i.e.\ whether any unimodular random graph supports an invariant Schreier structure, or whether it is possible to obtain a complete description of the invariant Schreier structures which sit atop a given invariant graph structure. It would also be interesting to understand invariant Schreier structures from a more algebraic point of view. The subgroups corresponding to distinct Schreier structures on the same underlying graph, for instance, are clearly isomorphic in a strong sense, but we do not know what else can be said.

\section{The space of rooted graphs}

Consider the space $\Omega$ of (isomorphism classes of) connected rooted graphs of bounded geometry, i.e.\ the space of connected graphs $\Gamma=(\Gamma,x)$ each of which is equipped with a distinguished vertex $x$, called its \emph{root}, and for which there exists a number $d$ (whose precise value will not presently concern us) such that
\[
\max_{y\in\Gamma}\deg(y)\leq d
\]
for all $\Gamma\in\Omega$. The space $\Omega$ may naturally be realized as the projective limit
\begin{equation}\label{proj}
\Omega=\varprojlim\Omega_r,
\end{equation}
where $\Omega_r$ is the set of (isomorphism classes of) $r$-neighborhoods centered at the roots of elements of $\Omega$ and the connecting morphisms $\pi_r:\Omega_{r+1}\to\Omega_r$ are restriction maps that send an $(r+1)$-neighborhood $V$ to the $r$-neighborhood $U$ of its root. (Looking at things the other way around, $\pi_r(V)=U$ only if there exists an embedding $U\hookrightarrow V$ that sends the root of $U$ to the root of $V$.) Endowing each of the sets $\Omega_r$ with the discrete topology turns $\Omega$ into a compact Polish space. Throughout this paper, we will think of an $r$-neighborhood $U\in\Omega_r$ both as a rooted graph and as the \emph{cylinder set}
\[
U=\{(\Gamma,x)\in\Omega\mid U_r(x)\cong U\},
\]
where $U_r(x)$ denotes the $r$-neighborhood of the point $x\in\Gamma$. A finite Borel measure $\mu$ on $\Omega$ is the same thing as a family of measures $\mu_r:\Omega_r\to\R$ that satisfies
\[
\mu_r(U)=\sum_{V\in\pi_r^{-1}(U)}\mu_{r+1}(V)
\]
for all $U\in\Omega_r$ and for all $r$. We will be interested primarily in the space of \emph{$2n$-regular} rooted graphs, namely rooted graphs each of whose vertices has degree $2n$, and we will also denote this space by $\Omega$. Note that imposing regularity is, in a sense, hardly restrictive: every graph of bounded geometry $d$, for instance, can be embedded into a regular graph (e.g.\ by attaching branches of the $d$-regular tree to vertices whose degrees are less than $d$).

\section{Invariant, unimodular, and sofic measures}

As is detailed in \cite{K}, there are two notions of invariance for measures $\mu$ on $\Omega$. There is invariance in the classical sense of Feldman and Moore \cite{FM}, according to which invariance is defined with respect to the underlying discrete measured equivalence relation of $\Omega$, and there is \emph{unimodularity} in the sense of Benjamini and Schramm~\cite{BS} (see also \cite{AL}). Let us go over these notions in turn. 

Consider first the equivalence relation $\E\subset\Omega\times\Omega$ whereby $(\Gamma,x)\sim(\Delta,y)$ if and only if there exists an isomorphism $\phi:\Gamma\to\Delta$ of unrooted graphs. The \emph{left projection} $\pi_\ell:\E\to\Omega$ that sends an element of $\E$ to its first coordinate determines a \emph{left counting measure} $\widetilde{\mu}_\ell$ on $\E$ with ``differential'' $d\widetilde{\mu}_\ell=d\nu_\Gamma\,d\mu$, where $\nu_\Gamma$ is the counting measure on the equivalence class of $\Gamma$. In other words, $\widetilde{\mu}_\ell$ is defined on Borel sets $E\subseteq\E$ as
\[
\widetilde{\mu}_\ell(E)=\int\nu_\Gamma(E\cap\pi_\ell^{-1}(\Gamma))\,d\mu=\int|E\cap\pi_\ell^{-1}(\Gamma)|\,d\mu.
\]
In analogous fashion, the \emph{right projection} $\pi_r:\E\to\Omega$ that sends an element of $\E$ to its second coordinate determines a \emph{right counting measure} $\widetilde{\mu}_r$ on $\E$. We now say that the measure $\mu$ is invariant if the lift $\widetilde{\mu}_\ell$ (or $\widetilde{\mu}_r$) is invariant under the involution $\iota$ given by $(\Gamma,\Delta)\mapsto(\Delta,\Gamma)$; see the following diagram.
\[
\begin{tikzcd}[column sep=small]
(\E,\widetilde{\mu}_\ell)\arrow[leftrightarrow]{rr}{\iota}\arrow{dr}[swap]{\pi_\ell}
&&(\E,\widetilde{\mu}_r)\arrow{dl}{\pi_r}\\
&(\Lambda,\mu)
\end{tikzcd}
\]
\begin{definition}\label{inv}(Invariance)
A measure $\mu$ on $\Omega$ is \emph{invariant} if $\widetilde{\mu}_\ell=\widetilde{\mu}_r$, i.e.\ if the left and right counting measures on the equivalence relation $\E$ coincide. We denote the space of invariant measures on $\Omega$ by $\I(\Omega)$.
\end{definition}

Consider next the space $\widetilde{\Omega}$ of \emph{doubly rooted graphs}, whose elements are graphs $(\Gamma,x,y)$ (which we again assume to be connected and of bounded geometry, with the same bound $d$) with a distinguished \emph{principal root} $x$ and \emph{secondary root} $y$. The left projection $\pi_x:\widetilde{\Omega}\to\Omega$ given by $(\Gamma,x,y)\mapsto(\Gamma,x)$ determines a measure $\widetilde{\mu}_x$ on $\widetilde{\Omega}$ with differential $d\widetilde{\mu}_x=dw_\Gamma\,d\mu$, where $w_\Gamma$ is the \emph{weighted counting measure} on $\Gamma$ given by
\[
w_\Gamma(y)=|\O_y(\Aut_x(\Gamma))|,
\]
i.e.\ the mass assigned to a vertex $y\in\Gamma$ is the cardinality of its orbit under the action of the stabilizer $\Aut_x(\Gamma)\leq\Aut(\Gamma)$. Thus, $\widetilde{\mu}_x$ is defined on Borel sets $E\subseteq\widetilde{\Omega}$ as
\[
\widetilde{\mu}_x(E)=\int w_\Gamma(E\cap\pi_x^{-1}(\Gamma))\,d\mu.
\]
Here as before there is a second projection, namely the right projection $\pi_y:\widetilde{\Omega}\to\Omega$ given by $(\Gamma,x,y)\mapsto(\Gamma,y)$, which, again in analogous fashion, determines a measure $\widetilde{\mu}_y$ on $\widetilde{\Omega}$. We say that the measure $\mu$ is unimodular if the lift $\widetilde{\mu}_x$ (or $\widetilde{\mu}_y$) is invariant under the natural involution given by $(\Gamma,x,y)\mapsto(\Gamma,y,x)$.

\begin{definition}(Unimodularity)
A measure $\mu$ on $\Omega$ is \emph{unimodular} if $\widetilde{\mu}_x=\widetilde{\mu}_y$, i.e.\ if the left and right weighted counting measures on the space of doubly rooted graphs coincide. We denote the space of unimodular measures on $\Omega$ by $\U(\Omega)$.
\end{definition}

Unimodularity can also be described as follows. Let $\widetilde{\Omega}^1\subset\widetilde{\Omega}$ denote the space of doubly rooted graphs $(\Gamma,x,y)$ whose principal and secondary roots are at unit distance from one another. We present $\widetilde{\Omega}^1$ as the projective limit
\begin{equation}\label{proj2}
\widetilde{\Omega}^1=\varprojlim\widetilde{\Omega}_r^1,
\end{equation}
where $\widetilde{\Omega}_r^1$ is the set of (isomorphism classes of) $r$-neighborhoods of edges that connect the principal and secondary roots of graphs $(\Gamma,x,y)\in\widetilde{\Omega}^1$. A measure $\mu$ on the projective system (\ref{proj}) may be lifted to a measure $\widetilde{\mu}$ on (\ref{proj2}) by putting
\[
\widetilde{\mu}(U,x,y)=w_U(y)\mu(U,x),
\]
where, as above, $w_U(y)=|\O_y(\Aut_x(U))|$, and the measure $\mu$ is unimodular precisely if $\widetilde{\mu}(U,x,y)=\widetilde{\mu}(U,y,x)$ for all $(U,x,y)\in\widetilde{\Omega}_r^1$ and for all $r$.

A special subspace of the space of unimodular measures on $\Omega$ is the space of \emph{sofic measures}, denoted $\S(\Omega)$. Their origin is group theoretic and goes back to Gromov~\cite{G}, who defined \emph{sofic groups} as those groups whose Cayley graphs can be approximated by a sequence of finite graphs (we will make this precise in a moment). It was later realized that the notion of soficity, which can be formulated in terms of the weak convergence of measures, naturally generalizes to objects other than groups, such as unimodular random graphs (see, once again, \cite{AL} and \cite{BS}), invariant random Schreier graphs, and, more generally, discrete measured equivalence relations \cite{EL}. It is unknown whether all unimodular measures are sofic. In fact, this question is open even for Dirac measures concentrated on Cayley graphs (that is, it is unknown whether all groups are sofic). We refer the reader to the survey of Pestov~\cite{Pe} for more on sofic groups.

To make sense of the definition of soficity, observe that a finite $2n$-regular graph $\Gamma$ naturally determines a unimodular measure on $\Omega$, namely the finitely supported measure attained by choosing a position of the root of $\Gamma$ uniformly at random. The definition of soficity now goes as follows.

\begin{definition}\label{sofic}(Soficity)
A unimodular measure $\mu\in\U(\Omega)$ is \emph{sofic} if there exists a sequence of finite graphs $\{\Gamma_i\}_{i\in\N}$ such that $\mu_i\to\mu$ weakly, where $\mu_i$ is the unimodular measure on $\Omega$ determined by $\Gamma_i$.
\end{definition}

An important fact about the space of unimodular measures is that it is closed in the weak-$*$ topology (see, for instance, \cite{K}), which shows that the space of sofic measures is indeed contained in the space of unimodular measures.

\section{Schreier graphs and Schreier structures}

Given a countable group $G$ with generating set $\A=\{a_i\}_{i\in I}$ and a subgroup $H\leq G$, consider the natural action of $G$ on the space of cosets $G\backslash H$. This action is transitive and determines a rooted graph $(\Gamma,H)$ as follows. The vertex set of $\Gamma$ is identified with $G\backslash H$, and two vertices $Hg$ and $Hg'$ are connected with an edge directed from $Hg$ to $Hg'$ and labeled with the generator $a_i$ if and only if $Hga_i=Hg'$. The graph $\Gamma$ (which is rooted at the coset $H$) is called a \emph{Schreier graph}, and we denote by $\Lambda(G)$ the \emph{space of Schreier graphs of $G$}, which we endow with a topology in the usual way (strictly speaking, $\Lambda(G)$, like $\Omega$, consists of isomorphism classes of graphs, where isomorphisms are required to respect the root and edge-labeling). Note that Schreier graphs are necessarily $2|\A|$-regular, meaning that each of their vertices has degree $2|\A|$. Schreier graphs may have both loops, i.e.\ cycles of length one, and multi-edges, i.e.\ multiple edges that join the same pair of vertices. Note also that Schreier graphs naturally generalize Cayley graphs, which arise whenever the subgroup $H$ is normal, i.e.\ when the cosets $Hg$ correspond to the elements of a group.

We will primarily be interested in Schreier graphs of the finitely generated free group of rank $n$ with a fixed set of generators, i.e.\
\[
\F_n=\langle a_1,\ldots,a_n\rangle,
\]
which, in a certain sense, subsumes all of the other cases. Our first observation is this: Given a Schreier graph $(\Gamma,H)\in\Lambda(\F_n)$, the subgroup $H\leq\F_n$ can be recovered from $\Gamma$ in a very natural way. Namely, $H$ is precisely the fundamental group $\pi_1(\Gamma,H)$, i.e.\ the set of words read upon traversing closed paths that begin and end at the coset $H$. Note that we thereby identify $\pi_1(\Gamma,H)$ as a specific subgroup of $\F_n$ and are not interested merely in its isomorphism class. By the above discussion, it follows that $\Lambda(G)\subseteq\Lambda(\F_n)$ whenever $G$ is a group with generating set $\A=\{a_1,\ldots,a_n\}$. It also follows that we could define Schreier graphs ``abstractly,'' without appealing to the coset structure determined by a subgroup of $\F_n$. That is, we could define a Schreier graph to be a (connected and rooted) $2n$-regular graph whose edges come in $n$ different colors and are colored so that every vertex is attached to precisely one ``incoming'' edge of a given color and one ``outgoing'' edge of that color. There is a natural one-to-one correspondence between the space of Schreier graphs $\Lambda(\F_n)$ viewed in the abstract and the lattice of subgroups of $\F_n$, denoted $L(\F_n)$. Namely, every subgroup $H\in L(\F_n)$ determines a Schreier graph, and every Schreier graph $\Gamma\in\Lambda(\F_n)$ determines a subgroup of $\F_n$ (by passing to the fundamental group).

\begin{definition}(Schreier structure)
Let $\Gamma\in\Omega$ be a $2n$-regular graph. A \emph{Schreier structure} $\Sigma$ on $\Gamma$ is a labeling of its edges by the generators of the free group $\F_n=\langle a_1,\ldots,a_n\rangle$ that turns $\Gamma$ into a Schreier graph, i.e.\ a map $\Sigma:E_0(\Gamma)\to\A$, where $E_0(\Gamma)$ denotes a choice of orientation for each edge $(x,y)\in\Gamma$, such that for each $x\in\Gamma$ and each $1\leq i\leq n$, there is precisely one incoming edge labeled with $a_i$ and one outgoing edge labeled with $a_i$ attached to $x$.
\end{definition}

It is natural to ask whether any (connected and rooted) $2n$-regular graph admits a Schreier structure, i.e.\ whether the forgetful map $f:\Lambda\to\Omega$ that sends a Schreier graph to its underlying unlabeled (and undirected) graph is surjective. It is well-known that this question has a positive answer, but the literature on Schreier graphs can be a bit fuzzy on this point. A statement of the result (in various forms) is to be found, for example, in \cite{Gr}, \cite{L}, \cite{dlH}, \cite{GKN}, and \cite{GN}, the latter four of which cite one another on this question, but the only proof of the claim in these sources is the one due to Gross \cite{Gr}, who showed in 1977 that every finite $2n$-regular graph can be realized as a Schreier graph of the symmetric group (this proof is reproduced in \cite{L}). In fact, seeing that every $2n$-regular graph can be realized as a Schreier graph of $\F_n$ requires nothing but classical results from graph theory that go back much further than the aforementioned sources. Let us go over the argument here. We would like to thank Grigorchuk for pointing out to us that he too has recently written a careful proof of the fact that every $2n$-regular graph admits a Schreier structure; it appears in his survey~\cite{Gri}.

A graph in $\Omega$ possesses a Schreier structure if and only if it is \emph{$2$-factorable}. Recall that a \emph{$2$-factor} of a graph $\Gamma$ is a $2$-regular subgraph of $\Gamma$ whose vertex set coincides with that of $\Gamma$. Note that a $2$-factor needn't be connected (if it were, it would be a Hamiltonian cycle). A graph is $2$-factorable if it can be decomposed into $2$-factors whose edge sets are mutually disjoint, whence the connection with Schreier structures becomes plain: if $\Gamma$ has a Schreier structure, then the subgraph $\Gamma_i$ of $\Gamma$ consisting of those edges labeled with the generator $a_i$ is a $2$-factor, and $\Gamma=\Gamma_1\cup\ldots\cup\Gamma_n$ is a $2$-factorization of $\Gamma$. Conversely, if $\Gamma=\Gamma_1\cup\ldots\cup\Gamma_n$ is a $2$-factorization of $\Gamma$, one need only give an orientation to the components of each $\Gamma_i$ and label their edges with the generator $a_i$ to obtain a labeling of $\Gamma$. The following result was proved by Petersen \cite{P} in 1891.

\begin{theorem}\label{pet}
\rm{(Petersen)} \emph{Every finite $2n$-regular graph is $2$-factorable.}
\end{theorem}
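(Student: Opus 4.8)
The plan is to establish the equivalent combinatorial statement already isolated in the text, namely that a finite $2n$-regular graph decomposes into $n$ edge-disjoint $2$-factors; by the discussion preceding the theorem this is exactly what is needed to produce a Schreier structure. Since a $2$-factorization of a disconnected graph is obtained by $2$-factorizing each connected component and taking unions, I would immediately reduce to the case where $\Gamma$ is connected.

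First I would exploit the fact that every vertex of $\Gamma$ has even degree $2n$, so that $\Gamma$ (being connected and finite) admits an Eulerian circuit, i.e.\ a closed walk traversing each edge exactly once. Orienting each edge in the direction in which the circuit traverses it yields an orientation in which the in-degree and out-degree of every vertex are equal: the circuit visits each vertex $v$ exactly $\deg(v)/2 = n$ times, and each visit uses one edge to enter $v$ and one edge to leave $v$, so every vertex has in-degree $n$ and out-degree $n$. (Loops and multi-edges, which are permitted here, cause no trouble: a loop contributes one unit to each of the in- and out-degree of its vertex, while parallel edges are simply traversed separately.)

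Next I would pass to the bipartite ``splitting'' graph $B$ with vertex classes $V^+ = \{v^+ : v \in V(\Gamma)\}$ and $V^- = \{v^- : v \in V(\Gamma)\}$, placing an edge between $u^+$ and $v^-$ for each directed edge $u \to v$ of the oriented $\Gamma$. By the previous paragraph $B$ is $n$-regular. The crucial input is then the classical fact --- a consequence of Hall's marriage theorem, or equivalently K\"onig's edge-coloring theorem --- that every $n$-regular bipartite (multi)graph decomposes into $n$ perfect matchings: regularity forces Hall's condition (for $S\subseteq V^+$, counting the $n|S|$ edges leaving $S$ shows its neighborhood has at least $|S|$ vertices) and hence a perfect matching, and removing it leaves an $(n-1)$-regular bipartite graph, so one finishes by induction on $n$. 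Finally I would pull each perfect matching $M$ back to $\Gamma$: $M$ selects, for every vertex, exactly one outgoing and exactly one incoming directed edge, so the corresponding edges of $\Gamma$ form a subgraph in which every vertex has degree $2$ --- that is, a $2$-factor (a disjoint union of cycles, possibly including loops). As the $n$ matchings partition the edge set of $B$, the resulting $n$ subgraphs partition the edge set of $\Gamma$, giving the desired $2$-factorization.

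The only genuinely nontrivial ingredient is the matching-decomposition step, so the ``hard part'' is really the invocation of Hall/K\"onig; everything else is bookkeeping. Since we are content to cite these classical results, the proof is short, and I would take care only to verify that the constructions survive the presence of loops and multiple edges, which the paper's graphs may have.
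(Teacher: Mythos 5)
Your proposal is correct and follows essentially the same route the paper indicates for Petersen's theorem: take an Euler tour to orient the edges with equal in- and out-degree, pass to the bipartite splitting, extract perfect matchings via Hall's theorem, and induct. The extra care you take with loops and multi-edges is a welcome but minor elaboration of the same argument the paper cites from Diestel.
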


Theorem~\ref{pet} can be proved by using the fact that a finite connected graph has an \emph{Euler tour}, i.e.\ a closed path that visits every edge exactly once, if and only if each of its vertices is of even degree. One can then split any finite $2n$-regular graph into a certain bipartite graph and apply Hall's theorem (also known as the marriage lemma) to extract a $2$-factor; by induction, one obtains a $2$-factorization (see Chapter 2.1 of \cite{D} for the full argument). By the above discussion, we have the following corollary.

\begin{corollary}\label{lab}
Every finite (connected and rooted) $2n$-regular graph admits a Schreier structure.
\end{corollary}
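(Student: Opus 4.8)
The plan is to deduce the corollary immediately from Petersen's theorem (Theorem~\ref{pet}) together with the equivalence, established in the discussion preceding that theorem, between $2$-factorability and the existence of a Schreier structure. Since the corollary concerns finite $2n$-regular graphs, Theorem~\ref{pet} applies verbatim, and connectedness and the choice of root play no role in the construction that follows.

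First I would apply Theorem~\ref{pet} to the given graph $\Gamma$ to obtain a $2$-factorization $\Gamma=\Gamma_1\cup\cdots\cup\Gamma_n$ into $2$-factors with mutually disjoint edge sets. Each $\Gamma_i$ is $2$-regular and spans the vertex set of $\Gamma$, so every connected component of $\Gamma_i$ is a cycle (a loop being a cycle of length one and a multi-edge a cycle of length two).

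Next, for each $i$ I would choose a cyclic orientation of every component of $\Gamma_i$ and label all the edges of $\Gamma_i$ with the generator $a_i$. Because each component of $\Gamma_i$ is an oriented cycle, every vertex has exactly one outgoing and one incoming edge within $\Gamma_i$, the vertex being the head of one edge and the tail of the next. Since the $\Gamma_i$ partition the edge set of $\Gamma$, collecting these labelings over $1\leq i\leq n$ assigns to each vertex exactly one incoming and one outgoing $a_i$-edge for every $i$, which is precisely the defining condition of a Schreier structure. This produces the desired map $\Sigma:E_0(\Gamma)\to\A$.

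I expect no genuine obstacle here, as both ingredients are already in hand; the only point requiring care is the bookkeeping at the orientation step, namely checking that degenerate components---loops and multi-edges, which Schreier graphs are permitted to have---are handled correctly. A loop labeled $a_i$ at the vertex $Hg$ serves simultaneously as the unique incoming and unique outgoing $a_i$-edge there, matching the relation $Hga_i=Hg$, while an oriented $2$-cycle supplies its two endpoints with their incoming and outgoing $a_i$-edges, so the Schreier condition is met in every case.
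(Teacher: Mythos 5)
Your proposal is correct and follows exactly the paper's route: the corollary is deduced from Petersen's theorem together with the observation, made in the discussion preceding Theorem~\ref{pet}, that orienting the cycle components of each $2$-factor and labeling them with $a_i$ yields a Schreier structure. You simply spell out the bookkeeping (including loops and multi-edges) in more detail than the paper, which states the corollary as an immediate consequence.
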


Passing to the infinite case is made possible via an application of the \emph{infinity lemma}, which asserts that every infinite locally finite tree contains a geodesic ray; it appears in K\"{o}nig's classical text on graph theory \cite{Ko}, first published in 1936 (see Chapter~6.2), or in Chapter~8.1 of \cite{D}.

\begin{theorem}\label{lab2}
Every (connected and rooted) $2n$-regular graph admits a Schreier structure.
\end{theorem}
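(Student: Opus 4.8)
The plan is to reduce the infinite case to the finite case of Corollary~\ref{lab}, using a compactness argument mediated by the infinity lemma. The obstacle in the infinite setting is that one cannot simply apply Petersen's theorem, which relies on finiteness (Euler tours and Hall's theorem both require finite graphs). Instead I would try to approximate an infinite $2n$-regular graph $\Gamma$ by its finite truncations, Schreier-label each truncation, and then extract a coherent labeling of all of $\Gamma$ as a ``limit'' of these finite labelings.

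Concretely, fix $\Gamma\in\Omega$ with root $x$, and for each $r$ let $B_r=B_r(x)$ be the ball of radius $r$ about the root. The issue is that $B_r$ is typically \emph{not} $2n$-regular: its boundary vertices have some edges leading outward to $B_{r+1}\setminus B_r$, so their degrees inside $B_r$ are too small. So the first step is to complete each $B_r$ to a finite $2n$-regular graph $\widehat{B}_r$ by adding auxiliary structure at the boundary---for instance, by attaching a finite $2n$-regular ``cap'' or by pairing up and joining the dangling half-edges so that every vertex again has degree $2n$ (one may need to add a bounded number of new vertices, but this is harmless since we only care about the labeling restricted to $B_{r_0}$ for $r_0 \ll r$). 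By Corollary~\ref{lab}, each finite graph $\widehat{B}_r$ admits a Schreier structure $\Sigma_r$, which in particular restricts to a valid partial Schreier labeling on a slightly smaller ball, say $B_{r-1}$, since interior vertices of $\widehat{B}_r$ are genuine vertices of $\Gamma$ with full degree $2n$.

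The heart of the argument is to pass from this sequence of finite labelings $\{\Sigma_r\}$ to a single Schreier structure on all of $\Gamma$. Here I would invoke the infinity lemma as follows. There are only finitely many ways to Schreier-label the edges incident to the ball $B_k$ (since $B_k$ is finite and the label set $\A$ is finite), so for each $k$ the set of such partial labelings is finite. Build a tree $T$ whose vertices at level $k$ are the partial Schreier labelings of $B_k$ that arise as the restriction of $\Sigma_r|_{B_k}$ for infinitely many $r$, with an edge joining a level-$(k+1)$ labeling to the level-$k$ labeling it extends. This $T$ is an infinite, locally finite tree---infinite because arbitrarily large balls get labeled by the $\Sigma_r$, and locally finite by the finiteness just noted---so by the infinity lemma it contains an infinite geodesic ray $(\sigma_0,\sigma_1,\sigma_2,\ldots)$. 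The labelings $\sigma_k$ are compatible by construction, so they amalgamate into a single labeling $\Sigma=\bigcup_k\sigma_k$ defined on every edge of $\Gamma$.

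It remains to check that $\Sigma$ is a genuine Schreier structure: for each vertex $x\in\Gamma$ and each generator $a_i$, there is exactly one incoming and one outgoing $a_i$-edge at $x$. But this is a purely local condition involving only the edges incident to $x$, all of which lie in some ball $B_k$; since $\sigma_k$ is the restriction of an honest Schreier structure on a $2n$-regular graph containing $B_k$ in its interior (for $r$ large), the condition holds at $x$. As $x$ was arbitrary, $\Sigma$ is a Schreier structure on $\Gamma$, completing the proof. I expect the main delicate point to be arranging the boundary completions $\widehat{B}_r$ so that the interior labelings are genuinely consistent with $\Gamma$ and so that the local Schreier condition is inherited at every vertex once $r$ is large relative to $k$; the infinity lemma then does the remaining work of selecting a coherent global labeling.
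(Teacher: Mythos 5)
Your proposal is correct and follows essentially the same route as the paper: close up each ball to a finite $2n$-regular graph, apply Corollary~\ref{lab}, and extract a coherent global labeling via the infinity lemma. The only detail you leave implicit---why the dangling half-edges at the boundary can be paired up---is settled in the paper by observing that the cut set of $U_r$ has even cardinality, via the degree-sum formula.
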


\begin{proof}
Let $\Gamma$ be an infinite $2n$-regular graph (the finite case has already been taken care of by Theorem \ref{pet}). Assume that $\Gamma$ is connected, and let $x_0\in\Gamma$ be an arbitrarily chosen root. Consider $U_r$, the $r$-neighborhood centered at $x_0$, and note that the cardinality of its \emph{cut set} $\C$, i.e.\ the set of edges that connect vertices in $U_r$ to vertices not in $U_r$, is even. This follows from the equation
\[
\sum_{x\in U_r}\deg(x)=2|E(U_r)|+|\C|,
\]
given that the left hand side and the first term in the right hand side are even numbers. Consider now the graph $U_r\cup\C$. By grouping the edges in $\C$ into pairs, removing each pair from $U_r\cup\C$, and connecting the vertices in $U_r$ to which the elements of each pair were attached by a new edge, we ``close up'' the neighborhood $U_r$ and turn it into a $2n$-regular graph. By Corollary \ref{lab}, this graph admits a Schreier structure, which in turn determines a labeling of $U_r$.

We now employ the infinity lemma. Let $\Sigma_r$ denote the set of Schreier structures of $U_r$ (we have just shown that $\Sigma_r$ is nonempty), and construct a tree by regarding the elements of each $\Sigma_r$ as vertices and connecting every vertex in $\Sigma_{r+1}$ by an edge to the vertex in $\Sigma_r$ that represents the Schreier structure obtained by restricting the structure on $U_{r+1}$ to $U_r$. It follows that there exists a geodesic ray in our tree, i.e.\ an infinite sequence of Schreier structures on the neighborhoods $\{U_r\}_{r\in\N}$ each of which is an extension of the last and which exhaust $\Gamma$. This implies the claim.
\end{proof}

\section{Schreier graphs versus unlabeled graphs}

In this section, we compare Schreier graphs and unlabeled graphs, focusing on the spaces of invariant and unimodular measures on these two classes of graphs and how such measures behave under the forgetful map that sends a Schreier graph to its underlying unlabeled graph. Note that a homomorphism of Schreier graphs is a homomorphism of graphs that respects the additional structure carried by a Schreier graph, i.e.\ that preserves the root and maps one edge to another only if both edges have the same label and orientation. An important feature of Schreier graphs is that this additional structure lends them a certain rigidity which is not generally enjoyed by unlabeled graphs.

\begin{proposition}\label{rigid}
The vertex stabilizer $\Aut_x(\Gamma)\leq\Aut(\Gamma)$ of a Schreier graph $(\Gamma,x)$ is always trivial.
\end{proposition}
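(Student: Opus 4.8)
The plan is to exploit the defining property of a Schreier structure: at each vertex and for each generator $a_i$, there is exactly one outgoing and one incoming edge labeled $a_i$. This means that from the root $x$, every word $w\in\F_n$ determines a unique vertex $w\cdot x$ reached by reading off the labels along the path spelled by $w$ (moving along an outgoing $a_i$-edge for the letter $a_i$, and along an incoming $a_i$-edge for $a_i^{-1}$). Since the graph is connected, every vertex is of the form $w\cdot x$ for some $w$. The key observation is that an automorphism of the Schreier graph must preserve the labeling and orientation of edges, so it is forced to commute with this ``reading'' operation.

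First I would fix an automorphism $\phi\in\Aut_x(\Gamma)$, so that $\phi(x)=x$. Next I would argue by induction on the distance from the root, or equivalently on word length, that $\phi$ fixes every vertex. The base case is $\phi(x)=x$. For the inductive step, suppose $y$ is a vertex at distance $r+1$, reached from some vertex $z$ at distance $r$ by an edge labeled $a_i$ (in some orientation). Since $\phi$ fixes $z$ and preserves both the label $a_i$ and the orientation of the edge, $\phi(y)$ must be the unique vertex joined to $z=\phi(z)$ by an $a_i$-edge of that same orientation. But the Schreier condition guarantees that this neighbor is unique, so $\phi(y)=y$. Hence $\phi$ fixes every vertex, and therefore every edge (again by uniqueness of labeled edges at each vertex), so $\phi$ is the identity.

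I do not expect any serious obstacle here; the rigidity is essentially immediate once one unwinds the definitions. The only point requiring care is the inductive step, where one must invoke \emph{both} parts of the Schreier condition: uniqueness of the outgoing $a_i$-edge \emph{and} uniqueness of the incoming $a_i$-edge, since the edge from $z$ to $y$ could be traversed in either direction relative to its orientation. Provided one is careful to treat the two orientations symmetrically, the argument closes cleanly. An alternative, slightly slicker phrasing would identify the vertex set with the coset space $\pi_1(\Gamma,x)\backslash\F_n$ and observe that the $\F_n$-action by the generators is simply transitive on the fibers in the relevant sense, so that any label- and root-preserving map commuting with this action is determined by its value at $x$; but the direct inductive argument is the most transparent.
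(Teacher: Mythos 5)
Your proof is correct and uses essentially the same mechanism as the paper: the uniqueness of the labeled, oriented edge at each vertex forces an automorphism fixing a vertex to fix all of its neighbors, and this propagates outward from the root. The paper phrases this as a contradiction (locating the first point where a geodesic and its image diverge) rather than as a direct induction on distance, but the underlying argument is the same.
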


\begin{proof}
Let $(\Gamma,x)\in\Lambda$ be an arbitrary Schreier graph, and suppose that $\phi\in\Aut_x(\Gamma)$ is a nontrivial automorphism, so that there exist distinct points $y,z\in\Gamma$ (which are necessarily equidistanced from $x$) such that $\phi(y)=z$. If $y$ and $z$ are at unit distance from $x$, then $\phi$ obviously fixes each of them, since, by definition, the edges $(x,y)$ and $(x,z)$ have different labels. If $y$ and $z$ are at distance $r\geq1$ from $x$, then consider a geodesic $\gamma:[0,r]\to\Gamma$ that joins $x$ to $y$. Since $\phi$ is an isometry, the image $\phi_*\gamma$ is a geodesic that joins $x$ to $z$. Now let $0\leq t<r$ be a value such that $\gamma(t)=\phi_*\gamma(t)$ but $\gamma(t+1)\neq\phi_*\gamma(t+1)$ (since $y\neq z$, such a value must exist). Then $\phi$  must send $\gamma(t+1)$ to $\phi_*\gamma(t+1)$, but this is impossible, since the edges $(\gamma(t),\gamma(t+1))$ and $(\gamma(t),\phi_*\gamma(t+1))$ again have different labels.
\end{proof}

The spaces $\I(\Omega)$ and $\U(\Omega)$ are not the same. The Dirac measure concentrated on an infinite vertex-transitive nonunimodular graph (such as the \emph{grandfather graph}, first constructed by Trofimov \cite{T}) is an example of a measure that is invariant but not unimodular. Conversely, taking an invariant measure supported on rigid graphs, i.e.\ graphs whose automorphism groups are trivial, and multiplying each of these graphs by a finite nonunimodular graph (such as the segment of length two) yields a measure which is unimodular but not invariant (see \cite{K}). As we will soon show, however, the notions of invariance and unimodularity coincide for Schreier graphs, and both can be viewed in terms of a third notion: conjugation-invariance.

Consider the action of $G$ on $L(G)$ by conjugation, i.e.\ the action given by $(g,H)\mapsto gHg^{-1}$. When thought of as an action on $\Lambda(G)$, it is easily seen to be continuous, and it admits an easily visualized interpretation: Given a Schreier graph $(\Gamma,H)$ and a $g\in G$, where we assume that $g$ has a fixed presentation in terms of the generators of $G$, it is possible to read the element $g$ starting from the root $H$ (or, indeed, from any other vertex). This is accomplished by following, in the proper order, edges labeled with the generators that comprise $g$ (note that following a generator $a_i^{-1}$ is tantamount to traversing a directed edge labeled with $a_i$ in the direction opposite to which the edge is pointing). Applying the element $g$ to the graph $(\Gamma,H)$ then amounts simply to ``shifting the root" of $(\Gamma,H)$ in the way just described. That is, one begins at the vertex $H$, then follows a path corresponding to the element $g$, and then declares its endpoint to be the new root. Note that if $G$ has generators of order two, then a path corresponding to an element $g\in G$ may not be unique; however, the endpoint of any path which represents $g$ is uniquely determined by $g$.

It is interesting to ask about the existence of invariant measures with respect to the action $G\circlearrowright L(G)$. Indeed, the study of such measures, which also go under the name of \emph{invariant random subgroups}, has recently attracted a great deal of attention (see \cite{AGV}, \cite{ADMN}, \cite{B}, \cite{BGK}, \cite{C}, \cite{V1}, and \cite{V2}). Let us say that a measure on $\Lambda(G)$ or, in light of the inclusion $\Lambda(G)\hookrightarrow\Lambda(\F_n)$, on $\Lambda(\F_n)=\Lambda$, is \emph{conjugation-invariant} if it is invariant under this action. Denote the space of such measures by $\C(\Lambda)$.

\begin{theorem}\label{inv}
The spaces of invariant, unimodular, and conjugation-invariant measures on the space of Schreier graphs coincide.
\end{theorem}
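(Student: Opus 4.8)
The plan is to prove the three equalities $\C(\Lambda)=\I(\Lambda)=\U(\Lambda)$ by establishing two things: first, that invariance and unimodularity coincide on $\Lambda$, which should follow quickly from the rigidity of Schreier graphs (Proposition~\ref{rigid}); and second, that invariance under the equivalence relation $\E$ is the same as conjugation-invariance, which is where the genuine content lies. I would organize the argument as $\U(\Lambda)=\I(\Lambda)$ followed by $\I(\Lambda)=\C(\Lambda)$.

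For the equality $\U(\Lambda)=\I(\Lambda)$, the key observation is that the two notions of invariance differ only in the weights placed on the fibers of the respective lifts. The invariant lift $\widetilde{\mu}_\ell$ uses the counting measure $\nu_\Gamma$ on an equivalence class, whereas the unimodular lift $\widetilde{\mu}_x$ uses the weighted counting measure $w_\Gamma(y)=|\O_y(\Aut_x(\Gamma))|$. By Proposition~\ref{rigid}, every Schreier graph has trivial vertex stabilizer, so $\Aut_x(\Gamma)$ is trivial and hence each orbit $\O_y(\Aut_x(\Gamma))$ is a singleton, giving $w_\Gamma(y)=1$ for all $y$. Thus on $\Lambda$ the weighted counting measure reduces to the ordinary counting measure, and the space $\widetilde{\Omega}$ of doubly rooted graphs can be identified with the equivalence relation $\E$ (a doubly rooted Schreier graph $(\Gamma,x,y)$ is the same data as the pair $((\Gamma,x),(\Gamma,y))\in\E$, using again that rigidity forces isomorphisms to be unique). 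Under this identification the unimodularity involution matches the involution $\iota$, so $\widetilde{\mu}_x=\widetilde{\mu}_y$ if and only if $\widetilde{\mu}_\ell=\widetilde{\mu}_r$, yielding $\U(\Lambda)=\I(\Lambda)$.

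For $\I(\Lambda)=\C(\Lambda)$, the idea is that the equivalence relation $\E$ on $\Lambda$ is generated by the partial isomorphisms ``move the root along an edge,'' and moving the root along an edge labeled $a_i$ is exactly the conjugation action of the generator $a_i\in\F_n$. More precisely, by rigidity the equivalence class of a Schreier graph $(\Gamma,x)$ is in bijection with the $\F_n$-orbit of the corresponding subgroup under conjugation, with the vertex $y=xg$ corresponding to $gHg^{-1}$; the full groupoid of $\E$ is generated by the $2n$ involutive partial maps shifting the root across each of the $n$ labeled edge-types. Invariance of $\mu$ under $\E$ is equivalent to invariance under each such generator, and I would verify that invariance under the generator shifting along $a_i$ is precisely invariance under the action of $a_i$. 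Since the generators $a_1,\dots,a_n$ generate $\F_n$, invariance under all of them is equivalent to conjugation-invariance. This gives $\I(\Lambda)=\C(\Lambda)$ and completes the chain of equalities.

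The main obstacle I anticipate is carefully checking the measure-theoretic bookkeeping in the identification of $\widetilde{\Omega}$ with $\E$ and in relating invariance under $\E$ to invariance under the individual generators of $\F_n$. The conceptual points (rigidity trivializes the weights; root-shifting is conjugation) are clean, but making the identifications on the level of the counting measures $\widetilde{\mu}_\ell$, $\widetilde{\mu}_r$, $\widetilde{\mu}_x$, $\widetilde{\mu}_y$ rigorous requires attention to how the differentials $d\nu_\Gamma\,d\mu$ and $dw_\Gamma\,d\mu$ disintegrate over fibers, and to the fact that conjugation by a general $g\in\F_n$ factors as a composition of the generator-shifts along a path reading $g$. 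I would also need to confirm that the $\F_n$-action on $\Lambda$ by conjugation is genuinely by measure-class-preserving transformations whose orbits coincide with the $\E$-classes, so that the two invariance conditions are testing the same thing.
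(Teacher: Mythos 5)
Your proposal follows essentially the same route as the paper: the equality $\I(\Lambda)=\U(\Lambda)$ via Proposition~\ref{rigid} (trivial stabilizers make $w_\Gamma$ the counting measure and identify the doubly rooted space with $\E$), and $\I(\Lambda)=\C(\Lambda)$ via the observation that the conjugation action of $\F_n$ (root-shifting along labeled edges) generates the orbit equivalence relation $\E$. The only difference is that the paper dispatches the second step by citing the classical Feldman--Moore equivalence between relation-invariance and invariance under a generating group action, whereas you propose to verify that equivalence directly; this is exactly the ``bookkeeping'' you flag, and it is standard.
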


\begin{proof}
Proposition~\ref{rigid} implies that $\I(\Lambda)=\U(\Lambda)$. Indeed, since the vertex stabilizer of a Schreier graph $\Gamma$ is always trivial, the spaces $\E$ and $\widetilde{\Lambda}^1$ may be identified, and the weighted counting measure $w_\Gamma$ is precisely the counting measure $\nu_\Gamma$. To see that $\C(\Lambda)=\I(\Lambda)$, it is enough to know that, by the classical theory (see Corollary~1 of \cite{FM} or Proposition~2.1 of \cite{KM}), a measure is invariant in the sense of Definition~\ref{inv} if and only if it is invariant with respect to the action of a countable group whose induced orbit equivalence relation coincides with the equivalence relation $\E\subset\Omega\times\Omega$. Since $\F_n$ is clearly such a group, it follows that $\C(\Lambda)=\I(\Lambda)=\U(\Lambda)$.
\end{proof}

Let $f:\Lambda\to\Omega$ be the forgetful map that sends a Schreier graph to its underlying unlabeled graph. Our next proposition shows that $f$ sends unimodular measures to unimodular measures.

\begin{proposition}\label{forget}
The image of a unimodular measure under $f$ is unimodular, i.e.\ $f_*\U(\Lambda)\subseteq\U(\Omega)$.
\end{proposition}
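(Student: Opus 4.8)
The plan is to unwind the definition of unimodularity on both $\Lambda$ and $\Omega$ and show that the defining equation on $\Lambda$ pushes forward to the defining equation on $\Omega$. Recall from Theorem~\ref{inv} that, because Schreier graphs are rigid (Proposition~\ref{rigid}), a unimodular measure $\mu$ on $\Lambda$ is the same as an invariant measure, and its weighted counting measure $w_\Gamma$ is simply the counting measure $\nu_\Gamma$. So on the Schreier side I may work with the cleaner condition $\widetilde{\mu}_\ell=\widetilde{\mu}_r$ on the equivalence relation $\E$, which by the projective-limit description amounts to $\mu(U,x,y)=\mu(U,y,x)$ for all doubly rooted labeled $r$-neighborhoods $(U,x,y)$ with $x,y$ at unit distance. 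My goal is to deduce that $\nu\colonequals f_*\mu$ satisfies the corresponding condition $\widetilde{\nu}(V,x,y)=\widetilde{\nu}(V,y,x)$ for all unlabeled doubly rooted $r$-neighborhoods $(V,x,y)\in\widetilde{\Omega}_r^1$, where now the weights $w_V(y)=|\O_y(\Aut_x(V))|$ genuinely appear, since unlabeled neighborhoods need not be rigid.

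The key computation is to express the pushforward weighted measure on $\widetilde{\Omega}_r^1$ as a sum over the labeled neighborhoods lying above a given unlabeled one. First I would fix an unlabeled doubly rooted neighborhood $(V,x,y)$ and consider the fiber $f^{-1}(V,x,y)$, i.e.\ the set of Schreier structures $\Sigma$ on $V$, each giving a labeled neighborhood $(U_\Sigma,x,y)$ with $f(U_\Sigma)=V$. Because the Schreier neighborhoods are rigid, $\widetilde{\mu}(U_\Sigma,x,y)=\mu(U_\Sigma,x)$, so summing over the fiber should give
\[
\widetilde{\nu}(V,x,y)=w_V(y)\sum_{\Sigma}\mu(U_\Sigma,x).
\]
The involution $(V,x,y)\mapsto(V,y,x)$ acts on labels by carrying each Schreier structure $\Sigma$ to the same labeling viewed from the swapped roots; the crux is that this is a bijection of fibers $f^{-1}(V,x,y)\to f^{-1}(V,y,x)$ that commutes with the involution on $\Lambda$. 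Thus the sum $\sum_\Sigma\mu(U_\Sigma,x)$ over one fiber is carried to the analogous sum over the other, and unimodularity of $\mu$ makes them equal term by term.

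The point where I expect the real work to lie is the bookkeeping of the weights $w_V(y)$ versus the cardinality of the fibers. On $\Lambda$ rigidity trivializes the weights, but on $\Omega$ the orbit $\O_y(\Aut_x(V))$ can be nontrivial, and I must check that the weight $w_V(y)$ emerges correctly from the pushforward rather than being lost or double-counted. The natural mechanism is that $\Aut_x(V)$ acts freely on the set of Schreier structures $\Sigma$ on $V$ (any automorphism fixing $x$ permutes the labelings without fixed points, by the rigidity argument of Proposition~\ref{rigid}), so the fiber decomposes into $\Aut_x(V)$-orbits each of size $|\Aut_x(V)|$, and vertices in the same $\Aut_x(V)$-orbit as $y$ contribute equal $\mu$-masses. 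I would make this precise by a double-counting or orbit-stabilizer argument relating $\sum_\Sigma\mu(U_\Sigma,x)$, the orbit size $w_V(y)=|\O_y(\Aut_x(V))|$, and the corresponding quantities after swapping roots, and then invoke the invariance $\widetilde{\mu}_\ell=\widetilde{\mu}_r$ on $\Lambda$ to conclude $\widetilde{\nu}(V,x,y)=\widetilde{\nu}(V,y,x)$. Verifying that the weight factors match up on both sides of the involution, with the automorphism group acting consistently on labelings and on vertices, is the main obstacle; once it is settled, unimodularity of $\nu$ follows immediately for every $r$.
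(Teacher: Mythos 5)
Your proposal is correct and follows essentially the same route as the paper: both pass to the lifted measures on doubly rooted $r$-neighborhoods, use the root-swapping bijection between the labeled fibers over $(V,x,y)$ and $(V,y,x)$ together with unimodularity of $\mu$, and invoke the rigidity of Schreier structures (Proposition~\ref{rigid}) to account for the weight $w_V(y)$ — your free-action/orbit-counting step is exactly the content of the paper's identity $|\widetilde{f}^{-1}(U,x,y)|=w_U(y)|f^{-1}(U,x)|$.
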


\begin{proof}
Lift $\mu$ to $\widetilde{\Lambda}^1$, and consider the map $\widetilde{f}:\widetilde{\Lambda}_r^1\to\widetilde{\Omega}_r^1$ that sends a neighborhood $(U,x,y)\in\widetilde{\Lambda}^1_r$ to its underlying unlabeled neighborhood. It is easy to see that both $f$ and $\widetilde{f}$ extend to homomorphisms of projective systems and therefore that $\nu:=f_*\mu$ and $\widetilde{\nu}:=\widetilde{f}_*\widetilde{\mu}$ are measures. We thus have a diagram
\begin{equation}
\begin{tikzcd}
(\widetilde{\Lambda}_r^1,\widetilde{\mu})\arrow{r}{\widetilde{f}}\arrow{d}[swap]{\pi}
&(\widetilde{\Omega}_r^1,\widetilde{\nu})\arrow{d}{\pi}\\
(\Lambda_r,\mu)\arrow{r}{f}
&(\Omega_r,\nu)
\end{tikzcd}
\end{equation}
for each $r$, where $\Lambda_r$ and $\Omega_r$ are the images of $\widetilde{\Lambda}_r^1$ and $\widetilde{\Omega}_r^1$, respectively, under the natural projection $(U,x,y)\mapsto(U,x)$. To see that the measure $\widetilde{\nu}$ satisfies the unimodularity condition, note that for any $(U,x,y)\in\widetilde{\Omega}_r^1$, there is a one-to-one correspondence between the preimages $\widetilde{f}^{-1}(U,x,y)$ and $\widetilde{f}^{-1}(U,y,x)$, which is given simply by exchanging the principal and secondary roots of the distinguished edges of neighborhoods in $\widetilde{\Lambda}_r^1$. (This correspondence is one-to-one by Proposition~\ref{rigid}.) It is now straightforward that, since the measure $\widetilde{\mu}$ is unimodular, the aforementioned preimages have the same mass and therefore that $\widetilde{\nu}(U,x,y)=\widetilde{\nu}(U,y,x)$.

It remains to check that $\widetilde{\nu}$ is in fact the lift of $\nu$. To see this, note that, again by Proposition~\ref{rigid},
\[
|\widetilde{f}^{-1}(U,x,y)|=w_U(y)|f^{-1}(U,x)|.
\]
Moreover, we have
\[
\pi_*\widetilde{f}^{-1}(U,x,y)=f^{-1}(U,x).
\]
A bit of diagram chasing now yields the result. Starting from the upper right hand corner of our diagram, we have
\begin{align*}
\widetilde{\nu}(U,x,y)&=\widetilde{\mu}(\widetilde{f}^{-1}(U,x,y))\\
&=\frac{1}{w_U(y)}\mu(\pi_*\widetilde{f}^{-1}(U,x,y))\\
&=\frac{1}{w_U(y)}\mu(f^{-1}(U,x))\\
&=\frac{1}{w_U(y)}\nu(U,x),
\end{align*}
so that $\nu(U,x)=w_U(y)\widetilde{\nu}(U,x,y)$, as desired.
\end{proof}
\begin{remark}
As shown in~\cite{K}, this implies that an invariant measure on the space of Schreier graphs is supported on graphs which are unimodular almost surely. A result which is similar in spirit was recently attained by Biringer and Tamuz~\cite{BT}, who showed that a conjugation-invariant measure on the lattice of subgroups of a unimodular group is supported on subgroups which are unimodular almost surely.
\end{remark}
An interesting consequence of Proposition~\ref{forget} is that it allows one to exhibit closed invariant subspaces of $\Lambda$ which do not admit an invariant measure.

\begin{corollary}\label{noinv}
Let $\Gamma\in\Omega$ be an infinite vertex-transitive nonunimodular graph. Then $f^{-1}(\Gamma)$, the space of Schreier structures over $\Gamma$, is a closed invariant subspace of $\Lambda$ which does not support an invariant measure.
\end{corollary}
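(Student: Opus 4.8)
The plan is to verify in turn the three assertions—that $f^{-1}(\Gamma)$ is closed, that it is invariant, and that it supports no invariant measure—the last being the substantive one. \emph{Closedness} is immediate: the forgetful map $f$ is continuous, since (as noted in the proof of Proposition~\ref{forget}) it is a homomorphism of the projective systems $\Lambda=\varprojlim\Lambda_r$ and $\Omega=\varprojlim\Omega_r$, and $\Omega$, being a compact Polish space, is Hausdorff, so the singleton $\{\Gamma\}$ is closed. Hence $f^{-1}(\Gamma)=f^{-1}(\{\Gamma\})$ is closed. (Note that the fiber is nonempty by Theorem~\ref{lab2}, so the statement is not vacuous.)

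For \emph{invariance}, I would recall that the conjugation action of $\F_n$ on $\Lambda$ acts simply by shifting the root: applying $g\in\F_n$ to a Schreier graph leaves the underlying labeled graph untouched and merely relocates the root along the path spelled out by $g$. Thus if $\sigma\in f^{-1}(\Gamma)$ is a Schreier structure on $\Gamma$ rooted at some vertex $x$, then $g\cdot\sigma$ is the same structure rooted at some other vertex $y$, so $f(g\cdot\sigma)=(\Gamma,y)$. Since $\Gamma$ is vertex-transitive, $(\Gamma,y)\cong(\Gamma,x)=\Gamma$ in $\Omega$, whence $f(g\cdot\sigma)=\Gamma$ and $g\cdot\sigma\in f^{-1}(\Gamma)$. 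Therefore the fiber is a closed conjugation-invariant subspace of $\Lambda$.

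The heart of the matter is the final claim. Suppose, toward a contradiction, that $f^{-1}(\Gamma)$ supports an invariant probability measure $\mu$. By Theorem~\ref{inv}, invariance and unimodularity coincide on $\Lambda$, so $\mu\in\U(\Lambda)$, and Proposition~\ref{forget} then gives $f_*\mu\in\U(\Omega)$. But $\mu$ is concentrated on $f^{-1}(\Gamma)$, all of which $f$ collapses to the single point $\Gamma$, so $f_*\mu=\delta_\Gamma$. We would thus have exhibited a unimodular measure equal to $\delta_\Gamma$.

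The step I expect to be the crux—and the only place where both hypotheses, vertex-transitivity and nonunimodularity, are genuinely in play—is showing that this is impossible, i.e.\ that $\delta_\Gamma\notin\U(\Omega)$. Here I would invoke the standard dictionary between the Benjamini--Schramm notion of unimodularity and the classical mass-transport principle: for a vertex-transitive graph, the Dirac measure $\delta_\Gamma$ satisfies the unimodularity condition of Section~3 precisely when $\Gamma$ is a unimodular graph in the usual sense (equivalently, when $\Aut(\Gamma)$ is a unimodular locally compact group). Since $\Gamma$ was assumed nonunimodular, $\delta_\Gamma$ fails the mass-transport identity, so $\delta_\Gamma\notin\U(\Omega)$, contradicting the previous paragraph and completing the argument.
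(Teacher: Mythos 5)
Your proposal is correct and follows essentially the same route as the paper: closedness and invariance come from the fact that the rerootings of a vertex-transitive graph form a single point of $\Omega$, and the substantive claim is obtained by pushing a putative invariant (hence unimodular, by Theorem~\ref{inv}) measure forward via Proposition~\ref{forget} to get $\delta_\Gamma\in\U(\Omega)$, contradicting nonunimodularity of $\Gamma$. The only difference is presentational: you spell out the continuity/Hausdorff argument for closedness and the mass-transport dictionary for why $\delta_\Gamma\notin\U(\Omega)$, both of which the paper leaves implicit.
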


\begin{proof}
Let $X\colonequals f^{-1}(\Gamma)$. It is easy to see that $X$ is closed and invariant, as the equivalence class of $\Gamma$ in the space of rooted graphs (that is, the set of rerootings of $\Gamma$ up to isomorphism) consists of a single point. Suppose that $\mu$ is an invariant (equivalently, unimodular) measure supported on $X$. Then its image $f_*\mu$ is the Dirac measure on $\Gamma$. But this is a nonunimodular measure, contradicting Proposition~\ref{forget}.
\end{proof}

\begin{remark}
More generally, Corollary~\ref{noinv} can be applied to nonunimodular graphs whose equivalence classes are finite.
\end{remark}

\begin{example}\label{grandfather}
Given a rooted $d$-regular tree $T$, where $d\geq3$, together with a \emph{boundary point} $\omega\in\partial T$, one constructs the grandfather graph $\Gamma$ of Trofimov \cite{T} as follows: note first that the boundary point $\omega$ allows one to assign an orientation to each edge of $T$, namely the orientation that ``points to $\omega$,'' i.e.\ given an edge $(x,y)$, there is a unique geodesic ray $\gamma:\Z_{\geq0}\to T$ beginning either at $x$ or at $y$ and such that $\lim_{t\to\infty}\gamma(t)=\omega$, and it is the orientation of this ray that determines the orientation of $(x,y)$. Next, connect each vertex $x\in T$ to its grandfather, namely the vertex one arrives at by moving two steps towards $\omega$ with respect to the orientation just defined. The result is a $(d^2-d+2)$-regular vertex-transitive nonunimodular graph, and moreover it is not difficult to see that $X\colonequals f^{-1}(\Gamma)$ is a large (uncountable) space (e.g.\ see Theorem~\ref{lab3} below). By Corollary~\ref{noinv}, $X$ does not support an invariant measure.
\end{example}

\section{Invariant Schreier structures over unlabeled graphs}

It would be interesting to fully understand the relationship between unimodular measures on $\Lambda$ and unimodular measures on $\Omega$. We do not know, for instance, whether the induced map $f:\U(\Lambda)\to\U(\Omega)$ is surjective, i.e.\ whether, given a unimodular measure $\nu$ on the space of rooted graphs, there always exists a unimodular measure $\mu$ on the space of Schreier graphs such that $f_*\mu=\nu$. Something quite close to this statement, however, is indeed true; namely, the induced map between the spaces of sofic measures on $\Lambda$ and $\Omega$ is surjective (note that this map is well-defined, as applying the forgetful map $f$ to a sofic approximation of a measure $\mu\in\S(\Lambda)$ yields a sofic approximation of the measure $f_*\mu$).

\begin{proposition}\label{sofsurj}
The induced map $f:\S(\Lambda)\to\S(\Omega)$ is surjective.
\end{proposition}

\begin{proof}
Let $\mu\in\S(\Omega)$ be a sofic measure and $\{\Gamma_i\}_{i\in\N}$ a sofic approximation of $\mu$ consisting of $2n$-regular graphs. By Theorem~\ref{lab2}, each $\Gamma_i$ may be endowed with a Schreier structure $\Sigma_i$. We thus obtain a sequence of measures $\nu_i\in\S(\Lambda)$, namely those arising from the graphs $(\Gamma_i,\Sigma_i)$. By compactness, this sequence has a convergent subsequence whose limit measure $\nu$ is obviously sofic and, moreover, must map to $\mu$ under $f$.
\end{proof}

A further natural question of interest is to describe the fiber of invariant measures $f^{-1}(\mu)$ over a given unimodular measure $\mu\in\U(\Omega)$. Although we are unable to answer this question in full generality, we are able to show that, under mild assumptions, this fiber is very large, in that it contains uncountably many ergodic measures. Invariant Schreier structures, in other words, are not ``trivial decorations'' but themselves possess a rich structure. The aforementioned mild assumption is rigidity. To be more precise, a graph is said to be \emph{rigid} if its automorphism group is trivial, and we require that our unimodular measure $\mu$ be supported on rigid graphs. Such an assumption is not very restrictive and, indeed, even natural, as essentially all known examples of invariant measures on the space of rooted graphs (such as random Galton-Watson trees---see \cite{LPP}---or their horospheric products \cite{KS}) are supported on rigid graphs.

In proving the following results, we will understand an \emph{$a_i$-cycle} to be any graph obtained by choosing a vertex $x$ in a Schreier graph and, with $x$ as our starting point, ``following the generator $a_i$'' in both directions as far as one can go. An $a_i$-cycle is thus always isomorphic to the Cayley graph of a cyclic group with generating set $\A=\{a_i\}$. A fundamental operation on $a_i$-cycles for us will be \emph{reversal}; that is, given an $a_i$-cycle, one may always reverse its orientation by applying the formal inversion $a_i\mapsto a_i^{-1}$ to its labels. Note that this operation does not destroy the Schreier structure of a graph (although it may well yield a new Schreier structure). We first establish a lemma.

\begin{lemma}\label{size}
Let $\Gamma$ be a Schreier graph whose underlying unlabeled graph is rigid, and let $a_i$ be a fixed generator of $\F_n$. Then the space $X$ of Schreier graphs obtained by independently reversing the orientations of $a_i$-cycles in $\Gamma$ or keeping their orientations fixed is either finite or uncountable.
\end{lemma}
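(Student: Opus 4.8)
The plan is to parametrize the reversals by a Cantor-type space and to show that, thanks to rigidity, the assignment of a reversal pattern to its isomorphism class is essentially injective, so that the cardinality of $X$ is forced to be a power of two. First I would record that the edges of $\Gamma$ labeled $a_i$ partition into the $a_i$-cycles; let $C$ denote this collection, which is at most countable since $\Gamma$ is connected and locally finite. A choice of reversals is then a function $\sigma\in\{0,1\}^C$, and each such $\sigma$ produces a Schreier structure $\Sigma_\sigma$ on the common underlying (unlabeled, rooted) graph of $\Gamma$, hence a point $\Gamma_\sigma\in\Lambda$; by construction $X=\{\Gamma_\sigma:\sigma\in\{0,1\}^C\}$. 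Since the $a_i$-cycles are edge-disjoint and reversing a cycle alters only the labels on that cycle, $\Sigma_\sigma$ is completely determined by the set of cycles on which $\sigma$ is nonzero.

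The crucial step is to determine when two patterns yield the same point of $X$. I claim $\Gamma_\sigma\cong\Gamma_{\sigma'}$ in $\Lambda$ if and only if $\Sigma_\sigma=\Sigma_{\sigma'}$ as labelings of $\Gamma$. The reverse implication is trivial. For the forward implication, an isomorphism of rooted Schreier graphs $\Gamma_\sigma\to\Gamma_{\sigma'}$ is in particular a root-preserving automorphism of the common underlying unlabeled graph; by the hypothesis that this graph is rigid, it must be the identity, and an identity map carrying $\Sigma_\sigma$ to $\Sigma_{\sigma'}$ forces $\Sigma_\sigma=\Sigma_{\sigma'}$. Next I would identify the genuinely reversible cycles: a short computation shows that reversal leaves the labeling of an $a_i$-cycle unchanged precisely when the cycle is a loop or has length two, while it alters the labeling of every cycle of length at least three and of every bi-infinite cycle. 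Let $C_1\subseteq C$ denote the set of these genuinely reversible cycles.

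Combining these observations, the map $\{0,1\}^{C_1}\to\Lambda$ sending a pattern to the corresponding $\Gamma_\sigma$ is injective (distinct patterns differ on some $c\in C_1$, hence produce labelings that differ on the edges of $c$, hence non-isomorphic Schreier graphs by the previous step) and has image exactly $X$; therefore $|X|=2^{|C_1|}$. Since $C_1$ is at most countable, this cardinality is finite when $C_1$ is finite and equals $2^{\aleph_0}$, hence is uncountable, when $C_1$ is infinite, and in no case is it countably infinite---which is precisely the asserted dichotomy. I expect the main obstacle to be the rigidity step: one must argue carefully that no genuine reversal can be ``undone'' by an isomorphism of the resulting Schreier graph, and it is exactly here that the hypothesis on the \emph{underlying} graph is used. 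Note that Proposition~\ref{rigid} only supplies triviality of the vertex stabilizer of a Schreier graph, whereas what is needed here is triviality of the automorphism group of the bare underlying graph. The precise bookkeeping of which small cycles are fixed by reversal is inessential, since the conclusion depends only on whether the number of genuinely reversible cycles is finite or infinite.
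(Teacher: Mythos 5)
Your proof is correct and follows essentially the same route as the paper's: parametrize reversal patterns by $\{0,1\}^{C}$ and use rigidity of the underlying unlabeled graph to force distinct patterns to give distinct points of $\Lambda$, so that $|X|$ is a power of two. Your extra bookkeeping with $C_1$ (discarding loops and $2$-cycles, whose reversal leaves the labeled graph unchanged) is a refinement the paper glosses over---its claimed bijection between $X$ and $\{0,1\}^J$ can fail on such cycles---but, as you note, the finite-or-uncountable dichotomy is unaffected.
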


\begin{proof}
Let $\{C_j\}_{j\in J}$, where $J\subseteq\N$, be an enumeration of the $a_i$-cycles in $\Gamma$, and consider the space $\{0,1\}^J$. For each $\omega=(\omega_j)_{j\in J}\in\{0,1\}^J$, denote by $\Gamma_\omega$ the Schreier graph obtained from $\Gamma$ by fixing the orientation of the $a_i$-cycle $C_j$ if $\omega_j=0$ and reversing it if $\omega_j=1$. The space $X$ is in one-to-one correspondence with $\{0,1\}^J$: on the one hand, each $\Gamma\in X$ can be realized as some $\Gamma_\omega$ (by recording the orientation of each of its $a_i$-cycles), and if $\Gamma$ and $\Delta$ are distinct elements of $X$, then clearly $\Gamma_\omega\neq\Delta_{\omega'}$. Conversely, if $\omega\neq\omega'$, then $\Gamma_\omega\neq\Gamma_{\omega'}$. Indeed, let $j\in J$ be an index for which $\omega_j\neq\omega_j'$. Then if $\Gamma_\omega$ and $\Gamma_{\omega'}$ have isomorphic Schreier structures, there must exist a nontrivial automorphism $\phi:\Gamma\to\Gamma$ of the underlying unlabeled graph (as the identity map preserves the orientation of $C_j$), which contradicts the fact that our Schreier graph is rigid. We thus find that $X$ is finite if and only if $J$ is finite and uncountable otherwise.
\end{proof}

\begin{theorem}\label{uncount}
Let $\mu\in\U(\Omega)$ be a nonatomic ergodic measure supported on rigid graphs. Then provided it is nonempty, the fiber $f^{-1}(\mu)$ of invariant measures over $\mu$ contains uncountably many ergodic measures.
\end{theorem}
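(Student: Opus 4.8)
The plan is to start from a single ergodic invariant measure in the fiber and manufacture a continuum of others by \emph{randomly reversing} the $a_1$-cycles, exactly the operation isolated in Lemma~\ref{size}.

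First I would produce an ergodic starting point. The set $f^{-1}(\mu)=\{\nu\in\I(\Lambda):f_*\nu=\mu\}$ is nonempty by hypothesis; since $\Lambda$ is compact, $\I(\Lambda)$ is weak-$*$ closed and $f_*$ is continuous and affine, this set is a nonempty compact convex face of $\I(\Lambda)$. By Krein--Milman it has an extreme point $\nu_0$, and any such extreme point is ergodic: a nontrivial invariant splitting $\nu_0=\tfrac12(\nu_1+\nu_2)$ would push forward to $\mu=\tfrac12(f_*\nu_1+f_*\nu_2)$, whence $f_*\nu_1=f_*\nu_2=\mu$ by ergodicity of $\mu$, contradicting extremality. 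Because $f$ carries atoms to atoms, $f_*\nu_0=\mu$ nonatomic forces $\nu_0$ nonatomic, so $\nu_0$ is supported on infinite graphs; after possibly relabelling the generators I would argue that $\nu_0$-a.e.\ graph has infinitely many $a_1$-cycles (otherwise the number of cycles of each generator is an a.s.\ constant finite invariant, and the resulting bounded-complexity family of isomorphism types is too small to carry a nonatomic measure). This reduction is one point that needs care.

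Next I would build the family. For $p\in(0,1)$, sample $(\Gamma,\Sigma)$ from $\nu_0$ and, \emph{independently across the (invariantly defined) partition of $\Gamma$ into $a_1$-cycles}, reverse each cycle with probability $p$; let $\nu_p$ be the law of the resulting random Schreier graph. Concretely one lifts $\nu_0$ to the extension $\nu_0^\ast$ on Schreier graphs carrying an IID$(p)$ bit on every $a_1$-cycle and pushes forward under the reversal map $\rho$ of Lemma~\ref{size}. Since the $a_1$-cycle partition, the IID labelling, and $\rho$ are all intrinsic to the unrooted labelled graph, the construction commutes with re-rooting, so $\nu_0^\ast$ and hence $\nu_p=\rho_*\nu_0^\ast$ are conjugation-invariant; by Theorem~\ref{inv} they are invariant (equivalently unimodular). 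Reversal does not alter the underlying unlabelled graph, so $f\circ\rho=f$ and therefore $f_*\nu_p=f_*\nu_0=\mu$, i.e.\ $\nu_p\in f^{-1}(\mu)$.

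Finally I would establish ergodicity and distinctness. Each $\nu_p$ ($p\in(0,1)$) is the image of the Bernoulli extension $\nu_0^\ast$ of the ergodic relation underlying $\nu_0$ over its infinite family of $a_1$-cycles; such a Bernoulli extension is ergodic, and an ergodic measure pushes to an ergodic measure, so each $\nu_p$ is ergodic. To separate them I would fix the explicit radius-$2$ cylinder observable $\Phi$ recording whether the $a_1$-cycle through the root and the $a_1$-cycle through its $a_2$-neighbour are distinct and co-oriented. Reversal flips co-orientation precisely when exactly one of the two cycles is reversed, so writing $q_0,r_0$ for the $\nu_0$-probabilities of the co- and anti-oriented versions of this pattern and $t_p=2p(1-p)$, one computes $\nu_p(\Phi=1)=q_0(1-t_p)+r_0t_p$; by ergodicity this is the a.s.\ empirical frequency of the pattern along a $\nu_p$-typical graph, hence an invariant of $\nu_p$. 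As $t_p$ is injective on $(0,\tfrac12]$, the $\nu_p$ are pairwise distinct whenever $q_0\neq r_0$, giving uncountably many ergodic elements of $f^{-1}(\mu)$. The hard part is exactly this nondegeneracy: if every such local orientation statistic of $\nu_0$ is symmetric ($q_0=r_0$) then $\nu_0$ is already ``reversal-symmetric'' and the $\nu_p$ collapse to one measure, so the argument must first replace $\nu_0$ by a deterministic reversal (or choose a generator) for which the distinguishing statistic is genuinely asymmetric; together with the ergodicity of the Bernoulli extension, this is where the real work lies.
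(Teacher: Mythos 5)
Your construction is the same as the paper's: fix a generator $a_i$ for which a $\nu_0$-random Schreier graph has infinitely many $a_i$-cycles, reverse each cycle independently with probability $p$, and integrate the resulting Bernoulli measures against the base lift $\nu_0$; Lemma~\ref{size} supplies nonatomicity, invariance is checked by noting the cycle decomposition is root-independent, and ergodicity is the ergodicity of a Bernoulli extension over an ergodic base with infinite fibers (the paper proves this last point by a direct cylinder-set argument rather than by citation, but the content is identical). Your Krein--Milman argument that the face $f^{-1}(\mu)$ has an ergodic extreme point is a genuine improvement --- the paper simply ``assumes'' the lift is ergodic without explaining why one exists --- and your observation that nonatomicity of $\mu$ forces nonatomicity of any lift matches the paper's parenthetical. (Both you and the paper are equally loose on the reduction to ``infinitely many $a_i$-cycles almost surely'': the paper's pigeonhole claims the alternative is that $\Gamma$ is finite, overlooking graphs covered by finitely many cycles some of which are infinite lines; your ``bounded-complexity'' remark has the same soft edge.)

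The one place you diverge is the step you flag as the hard part, distinguishing the $\nu_p$. The paper does not use a single observable: it reads the distinctness off the explicit description of $\nu_p$ on cylinder sets. That is the better move, and it nearly closes your worry: for each cylinder $V$ the quantity $\nu_p(V)$ is a polynomial in $p$, so either $p\mapsto\nu_p(V)$ is constant for \emph{every} cylinder $V$ (total collapse of the family) or some $\nu_p(V)$ is a nonconstant polynomial, in which case $p\mapsto\nu_p$ is finite-to-one on $(0,1)$ and you already get uncountably many distinct ergodic measures. In particular, degeneracy of your particular radius-$2$ statistic ($q_0=r_0$) does not by itself doom the argument; you should test all cylinders, not one. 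What remains --- and what neither you nor the paper actually rules out --- is the total-collapse case, which occurs precisely when $\nu_0$ is its own $p=\tfrac12$ randomization, i.e.\ when, conditionally on everything else, the $a_i$-cycle orientations under $\nu_0$ are already i.i.d.\ fair coins; such lifts do exist in the fiber (apply the $p=\tfrac12$ construction to any lift), so the possibility cannot be waved away. The paper covers this with the words ``makes plain,'' so your honest flag has located the soft spot in the published proof rather than introduced a new one; to finish, one must either produce a starting lift that is not reversal-symmetric for some generator, or treat that degenerate case by a separate argument.
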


\begin{proof}
Let $\nu\in f^{-1}(\mu)$ be a lift of $\mu$ to a (necessarily nonatomic) invariant measure on $\Lambda$; assume, moreover, that $\nu$ is also ergodic. Put $X\colonequals\supp(\nu)$, and let $p\in(0,1)$ be a fixed probability. By the pigeonhole principle, there must exist a generator $a_i$ of $\F_n$ such that a $\nu$-random Schreier graph $\Gamma$ contains infinitely many $a_i$-cycles with positive probability, since otherwise $\Gamma$ would be finite almost surely and $\nu$ would be an atomic measure. By ergodicity, it must in fact be the case that almost every $\Gamma\in X$ contains infinitely many $a_i$-cycles. For each Schreier graph $\Gamma\in X$, denote by $\nu_{\Gamma,p}$ the Bernoulli measure over $f(\Gamma)$---the underlying unlabeled graph---obtained by independently reversing the orientation of each $a_i$-cycle (for our chosen index $i$) of $\Gamma$ with probability $p$. By Lemma~\ref{size}, these measures are nonatomic. Denote by $\nu_p$ the measure obtained by integrating the measures $\nu_{\Gamma,p}$ against the base measure $\nu$.

The measure $\nu_p$ can be described explicitly as follows. Let $U\in\Lambda_r$ be a cylinder set for which $\nu(U)>0$. The graph $U$ has an obvious ``cycle decomposition,'' namely the $2$-factorization that comes from its Schreier structure; independently reversing (with probability $p$) the orientations of the $a_i$-cycles in this factorization yields a (conditional) Bernoulli measure on the set of neighborhoods $U_1,\ldots,U_k$ with the same cycle decomposition as $U$. Since reversing the orientation of a cycle in $U$ may yield a neighborhood isomorphic to $U$, we must quotient isomorphic neighborhoods $U_i\cong U_j$. Doing this for all $U\in\Lambda_r$ determines the measures that $\nu_p$ assigns to cylinder sets and also makes plain that, if $p\neq q$, then $\nu_p\neq\nu_q$.

It is not difficult to see that $\nu_p$ is invariant; indeed, passing to the space $\widetilde{\Lambda}^1$ of doubly rooted graphs, it is obvious that, for a given doubly rooted neighborhood $(U,x,y)\in\widetilde{\Lambda}_r^1$, we have $\nu_p(U,x,y)=\nu_p(U,y,x)$, since the cycle decomposition of a neighborhood is independent of a choice of basepoint(s). Moreover, the measure $\nu_p$ is ergodic: Put $\widetilde{X}\colonequals\supp(\nu_p)$ and denote by $\pi:\widetilde{X}\to X$ the obvious projection of $\widetilde{X}$ onto $X$, and suppose that $A\subset\widetilde{X}$ is a nontrivial invariant set. Assume for the moment that $A$ is a union of cylinder sets. Then there exists a cylinder set $U\subset\widetilde{X}$ such that $A\cap U=\emptyset$, and by ergodicity of the measure $\nu$, for every $\Gamma=(\Gamma,H)\in A$ there exist infinitely many $g\in\F_n$ (corresponding to infinitely many distinct positions of the root of $\Gamma$) such that $(\Gamma,gHg^{-1})\in\pi(U)$. On the other hand, the set of Schreier graphs $(\Gamma,H)$ such that $(\Gamma,gHg^{-1})\notin U$ for all $g\in\F_n$ is a null set with respect to any conditional measure $\nu_{\Gamma,p}$ and hence a null set with respect to $\nu_p$. It follows that $\nu_p(A)=0$, a contradiction. Since $A$ can be approximated to arbitrary accuracy by unions of cylinder sets (i.e.\ for any $\eps>0$, there exists a union of cylinder sets $A_\eps$ with $\nu_p(A\bigtriangleup A_\eps)<\eps$), we find that $A$ must be trivial.
\end{proof}

Let us consider highly nonrigid graphs as well. The following theorem shows that in the case when $\nu$ is the Dirac measure concentrated on an unlabeled Cayley graph, it can very often be lifted to a nonatomic measure in $\I(\Lambda)=\U(\Lambda)$.

\begin{theorem}\label{lab3}
Let $G$ be an infinite noncyclic group, and suppose $\A=\{a_1,\ldots,a_n\}$ is a generating set for $G$ such that none of the elements $a_ia_j\in G$, for distinct $1\leq i,j\leq n$, is of order two. Then there exists a nonatomic measure $\mu\in\I(\Lambda)$ such that $f_*\mu=\delta_G$, where $\delta_G$ is the Dirac measure concentrated on an unlabeled Cayley graph of $G$.
\end{theorem}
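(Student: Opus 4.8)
The plan is to realize the desired measure as the push-forward of a Bernoulli measure under the randomized cycle-reversal procedure of Lemma~\ref{size}, and then to show that the inevitable loss of rigidity (the Cayley graph is highly symmetric) is compensated by the hypothesis on the products $a_ia_j$. Write $\Gamma=\mathrm{Cay}(G,\A)$ for the unlabeled Cayley graph, so that $f^{-1}(\delta_G)$ is exactly the space of Schreier structures on $\Gamma$. Observe first that the \emph{standard} structure coming from $G$ itself is a fixed point of the conjugation action, since left translation by $g^{-1}$ is a label-preserving isomorphism carrying the root $g$ to the root $e$; thus the naive lift of $\delta_G$ is the atom $\delta_{(\Gamma,\mathrm{std})}$, and the entire point is to spread this atom out. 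Since reversing the orientation of an $a_i$-cycle leaves the underlying unlabeled graph unchanged, every structure produced by reversals still lies in $f^{-1}(\delta_G)$.

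For the construction I would fix a generator $a_i$ for which $\Gamma$ contains infinitely many $a_i$-cycles and for which reversal is nontrivial; these cycles are indexed by the cosets $G/\langle a_i\rangle$, on which $G$ acts by left translation. Fixing $p\in(0,1)$, let $\beta_p$ be the Bernoulli$(p)$ measure on $\{0,1\}^{G/\langle a_i\rangle}$ and define $\Phi\colon\omega\mapsto(\Gamma,\Sigma_\omega,e)$, where $\Sigma_\omega$ reverses the cycle indexed by $c$ precisely when $\omega_c=1$; set $\mu\colonequals\Phi_*\beta_p$. Then $f_*\mu=\delta_G$ by construction, and invariance is formal: left translation by $g$ permutes the cosets and carries $\Sigma_\omega$ rooted at $g$ to $\Sigma_{g^{-1}\cdot\omega}$ rooted at $e$, so $\Phi$ intertwines the conjugation action on $\Lambda$ with the coordinate action of $G$ on the pattern space. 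As $\beta_p$ is $G$-invariant, $\mu$ is conjugation-invariant, hence invariant and unimodular by Theorem~\ref{inv}.

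The crux is nonatomicity. Ergodicity of $\beta_p$ under the infinite transitive $G$-action transfers through the equivariant map $\Phi$ to ergodicity of $\mu$, and an ergodic invariant probability measure is nonatomic as soon as its support is not a single finite orbit; so the task reduces to showing that reversals produce infinitely many non-isomorphic rooted structures on a set of positive $\beta_p$-measure. This is where one would normally invoke rigidity (Lemma~\ref{size}, via Proposition~\ref{rigid}), but $\Gamma$ is \emph{not} rigid—left translations, and in the abelian case inversion, are genuine automorphisms—so distinct patterns need not give distinct structures outright. I expect the main obstacle to be ruling out an automorphism of $\Gamma$ that reverses a single $a_i$-cycle while fixing the rest of the structure, since such a symmetry would collapse the whole Bernoulli family back to an atom.

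This is precisely where the hypothesis enters. If $(a_ia_j)^2=e$ then $a_ja_ia_j=a_i^{-1}$, and this relation manufactures, through the $a_j$-edges, exactly the kind of local orientation-reversing symmetry of the $a_i$-cycles that would be fatal; forbidding order two for every $a_ia_j$ should conversely guarantee that no automorphism reverses $a_i$-cycles in the correlated way that destroys nonatomicity. Making this rigorous—controlling $\Aut(\Gamma)$ well enough to see that, modulo the harmless global symmetries, single-cycle reversals remain detectable, and checking that ``$G$ infinite and noncyclic'' (so $n\ge 2$) really furnishes a generator $a_i$ with infinitely many reversible cycles—is the heart of the argument. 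Once detectability is established, the image of $\Phi$ is uncountable off a null set, the support of $\mu$ is infinite, and ergodicity forces $\mu$ to be nonatomic, as required.
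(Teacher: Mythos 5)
Your strategy---reverse $a_i$-cycles for one of the \emph{original} generators $a_i$ inside $\mathrm{Cay}(G,\A)$---is not the paper's, and it cannot be repaired in general. Take $G=\F_2$ with its free generating set $\A=\{a_1,a_2\}$: this satisfies every hypothesis of the theorem ($G$ infinite, noncyclic, $a_1a_2$ of infinite order), but the unlabeled Cayley graph is the $4$-regular tree, and \emph{every} Schreier structure on a tree represents the trivial subgroup of $\F_2$ (any closed path in a tree collapses by backtracking, so the word it spells is freely trivial). Hence $f^{-1}(\delta_{\mathrm{Cay}(G,\A)})$ is a single point and no reversal scheme on the original Cayley graph can produce a nonatomic lift. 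This is exactly the content of the remark following the theorem in the paper (``one cannot in general insist on a minimal generating set''). The step you defer as ``the heart of the argument''---detectability of a single cycle reversal---is therefore not a technical loose end but the place where your construction genuinely fails, and the statement is only salvaged by the phrase ``\emph{an} unlabeled Cayley graph of $G$'': you are free to change the generating set, and you must.

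The paper's key move, which your proposal is missing, is to \emph{enlarge} the generating set: pick $a_i$ with infinitely many $a_i$-cycles and $a_j\neq a_i$, adjoin the new generator $a_{n+1}\colonequals a_ia_j$, and work in the Cayley graph $G_0$ of $G$ with respect to $\A_0=\A\cup\{a_{n+1}\}$, reversing $a_{n+1}$-cycles. A single reversal is then detected inside the fundamental group: the word $w(\gamma)a_{n+1}w(\gamma')^{-1}$ (where $\gamma,\gamma'$ are geodesics in the old Cayley graph from the root to the endpoints $x,y$ of a reversed $a_{n+1}$-edge) lies in the subgroup of $\F_{n+1}$ corresponding to one orientation but not the other, \emph{unless} $a_{n+1}=a_{n+1}^{-1}$. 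This is the true role of the hypothesis that no $a_ia_j$ has order two; it is not, as you conjecture, a device for excluding orientation-reversing automorphisms of $\mathrm{Cay}(G,\A)$. With detectability in hand, distinct orientation patterns give distinct subgroups of $\F_{n+1}$, the Bernoulli-$p$ randomization pushes forward to a nonatomic measure, and invariance and ergodicity follow as in Theorem~\ref{uncount}. Your discussion of invariance of the Bernoulli lift and of ergodicity forcing nonatomicity is sound as far as it goes, but it is built on the wrong underlying graph.
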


\begin{proof}
Assume, without loss of generality, that $\A$ does not contain the identity, and let $a_i\in\A$ be a generator such that $G$ (which we think of as the Cayley graph determined by $\A$) contains infinitely many $a_i$-cycles. Since $G$ is infinite and noncyclic, such an $a_i$ must exist. Now let $a_j\in\A$ be a generator distinct from $a_i$ (such a generator must again exist, since otherwise $G\cong\Z$), and put $a_{n+1}\colonequals a_ia_j$. Let $\A_0=\A\cup\{a_{n+1}\}$, and let $G_0$ be the Cayley graph of $G$ determined by our new generating set.

Consider now the space $X\subset\Lambda(\F_{n+1})$ obtained from $G_0$ by independently reversing the orientation of each $a_{n+1}$-cycle contained in $G_0$ or leaving it the same. We claim that the space $X$ is uncountable: Let $\Gamma,\Gamma'\in X$ be two relabelings of $G_0$ such that $\Gamma$ keeps the orientation of a particular $a_{n+1}$-cycle $C$ the same whereas $\Gamma'$ reverses it.

Next, choose a vertex $x$ in $C\subset \Gamma$, and let $y$ denote the vertex reached upon traversing the outgoing edge labeled with $a_{n+1}$ attached to $x$. Let $\gamma,\gamma':[0,r]\to G$ be geodesics in $G$ (and not in $G_0$) that connect the origin to $x$ and to $y$, respectively (note that $\gamma$ and $\gamma'$ may be empty), and denote by $H$ and $H'$ the subgroups corresponding to the graphs $\Gamma$ and $\Gamma'$, respectively. Then
\[
w(\gamma)a_{n+1}w(\gamma')^{-1}\equalscolon h\in H,
\]
where $w(\gamma)$ and $w(\gamma')$ are the words read upon traversing $\gamma$ and $\gamma'$. But it is not difficult to see that $h\in H'$ if and only if $a_{n+1}=a_{n+1}^{-1}$, i.e.\ if and only if $a_{n+1}$ has order two, a contradiction (see Figure~1). It follows that if $\Gamma,\Gamma'\in X$ assign different orientations to a particular $a_{n+1}$-cycle, then they represent distinct subgroups of $\F_{n+1}$. On the other hand, the number of ways to assign orientations to the $a_{n+1}$-cycles in $G_0$ is clearly uncountable. Therefore, $X$ is uncountable.

By choosing to reverse the orientations of $a_{n+1}$-cycles independently of one another with a fixed probability $p\in(0,1)$, we obtain a measure $\mu$ whose support is $X$ and which, in light of the fact that $X$ is uncountable, is nonatomic. The measure $\mu$ is ergodic by the same argument given in Theorem~\ref{uncount}.
\end{proof}
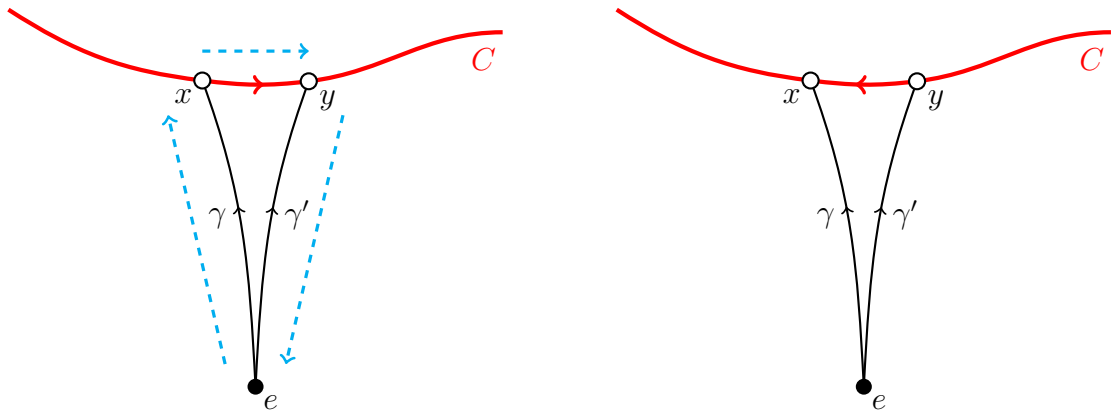
\begin{figure}
\centering
\begin{tikzpicture}[scale=1,root node/.style={circle,thick,draw=none,fill,black,inner sep=0pt,minimum size=6pt},normal node/.style={circle,thick,draw=black,fill=white,inner sep=0pt,minimum size=6pt}]


\draw[thick,directed](4,1) to[out=93,in=-70](3.3,5.06);
\draw[thick,directed](4,1) to[out=87,in=-110](4.7,5.05);

\draw[red,ultra thick](0.75,6) to[out=-33,in=173](3.3,5.06);
\draw[red,ultra thick,directed](3.3,5.06) to[out=-7,in=-173](4.7,5.05);
\draw[red,ultra thick](4.7,5.05) to[out=7,in=180](7.25,5.7);

\draw[->,cyan,dashed,very thick](3.6,1.3)--(2.85,4.6);
\draw[->,cyan,dashed,very thick](5.15,4.6)--(4.4,1.3);
\draw[->,cyan,dashed,very thick](3.3,5.45)--(4.7,5.45);

\node[root node] at (4,1){};
\node[normal node] at (3.3,5.06){};
\node[normal node] at (4.7,5.05){};

\node at (4.2,0.8){$e$};
\node at (3.5,3.23){$\gamma$};
\node at (4.55,3.28){$\gamma'$};
\node at (3.05,4.85){$x$};
\node at (4.95,4.8){$y$};
\node[red] at (7,5.35){$C$};


\draw[thick,directed](12,1) to[out=93,in=-70](11.3,5.06);
\draw[thick,directed](12,1) to[out=87,in=-110](12.7,5.05);

\draw[red,ultra thick](8.75,6) to[out=-33,in=173](11.3,5.06);
\draw[red,ultra thick,directed](12.7,5.05) to[out=-173,in=-7](11.3,5.06);
\draw[red,ultra thick](12.7,5.05) to[out=7,in=180](15.25,5.7);

\node[root node] at (12,1){};
\node[normal node] at (11.3,5.06){};
\node[normal node] at (12.7,5.05){};

\node at (12.2,0.8){$e$};
\node at (11.5,3.23){$\gamma$};
\node at (12.55,3.28){$\gamma'$};
\node at (11.05,4.85){$x$};
\node at (12.95,4.8){$y$};
\node[red] at (15,5.35){$C$};

\end{tikzpicture}
\caption{If two elements of $X$ assign different orientations to a particular $a_{n+1}$-cycle $C$ in $G_0$, then they must represent distinct subgroups of $\F_{n+1}$, as the word read upon traversing the path $\gamma$, then following the outgoing edge labeled with $a_{n+1}$, and then traversing the inverse of $\gamma'$ (left) cannot belong to both subgroups unless $a_{n+1}$ has order two.}
\end{figure}
\begin{remark}
Theorem~\ref{lab3} certainly applies to a large class of groups. Even so, the conditions of the theorem can be weakened. Indeed, the theorem holds whenever $G$ has a Cayley graph that contains infinitely many $a_i$-cycles (for some $i$) such that its fundamental group changes upon reversing the orientation of one (and hence any) such cycle. On the other hand, note that one cannot in general insist on a minimal generating set. This is impossible, for example, when $G$ is a free product of cyclic groups.
\end{remark}

To conclude, let us pose a concrete question to which we do not know the answer.

\begin{question}
Describe the invariant Schreier structures on $\Z^2$, the standard two-dimensional lattice.
\end{question}
It is not difficult to see that there exists a large number of invariant Schreier structures on $\Z^2$. Consider, as in the proof of Theorem~\ref{lab3}, the random Schreier structures one obtains by taking the standard Cayley structure on $\Z^2$ and randomly reversing the orientations of $a_i$-chains (that is, horizontal or vertical copies of $\Z$). Yet there are doubtless many more invariant Schreier structures, e.g.\ ones where $a_i$-cycles consist of ``infinite staircases,'' or finite-length cycles. It would be nice to have a full description of the geometric possibilities. Here is an even simpler question to which we do not know the answer:
\begin{question}
Describe the periodic Schreier structures on $\Z^2$.
\end{question}
By a periodic Schreier structure, we mean one whose orbit under the action of the free group is finite.

\end{document}